\newcommand{\minimize}[1]{\mathop{\hbox{minimize}}_{#1}}
\newcommand{\R}{{\rm I\!R}}
\newcommand{\prox}[1]{\hbox{{\bf prox}}_{#1}}
\newcommand{\dom}[1]{\hbox{{\bf dom~}}{#1}}
\newcommand{\argmin}[1]{\mathop{\hbox{argmin}}_{#1}}
\newcommand{\interior}{\mathop{\hbox{int}}}
\newtheorem{assumption}{Assumption}
\def\norm#1{\|#1\|}
\def\half{\frac 1 2}
\begin{document}

\title{``Active-set complexity'' of proximal gradient
\thanks{We acknowledge the support of the Natural Sciences and Engineering Research Council of Canada (NSERC) [Discovery Grant, reference numbers \texttt{\#}355571-2013, \texttt{\#}2015-06068].\\
Cette recherche a \'et\'e financ\'ee par le Conseil de recherches en sciences naturelles et en g\'enie du Canada (CRSNG) [Discovery Grant, num\'eros de r\'ef\'erence \texttt{\#}355571-2013, \texttt{\#}2015-06068].}
}
\subtitle{How long does it take to find the sparsity pattern?}


\author{Julie~Nutini \and Mark~Schmidt \and Warren~Hare}


\institute{Julie Nutini \at
             Department of Computer Science, The University of British Columbia \\
             201-2366 Main Mall,
    	     Vancouver BC, V6T 1Z4,
    	     Canada  \\
              \email{jnutini@cs.ubc.ca}           
           \and
           Mark Schmidt \at
           Department of Computer Science, The University of British Columbia \\
           \email{schmidtm@cs.ubc.ca}
           \and
           Warren Hare \at
           Department of Mathematics, The University of British Columbia Okanagan \\
           \email{warren.hare@ubc.ca}  
}

\date{Received: date / Accepted: date}

\maketitle

\begin{abstract}
Proximal gradient methods have been found to be highly effective for solving minimization problems with non-negative constraints or $\ell_1$-regularization. Under suitable nondegeneracy conditions, it is known that these algorithms identify the optimal sparsity pattern for these types of problems in a finite number of iterations. However, it is not known how many iterations this may take. We introduce the notion of the ``active-set complexity'', which in these cases is the number of iterations before an algorithm is guaranteed to have identified the final sparsity pattern. We further give a bound on the active-set complexity of proximal gradient methods in the common case of minimizing the sum of a strongly-convex smooth function and a separable convex non-smooth function.
\keywords{convex optimization \and non-smooth optimization \and proximal gradient method \and active-set identification \and active-set complexity}
\end{abstract}

\section{Motivation}\label{sec:motivation}
We consider the problem 
\begin{equation}\label{eq:problem}
	\minimize{x \in \R^n} \quad f(x) + g(x),
\end{equation}
where $f$ is $\mu$-strongly convex and the gradient $\nabla f$ is $L$-Lipschitz continuous.
We assume that $g$ is a separable function,
\[
g(x) = \sum_{i=1}^n g_i(x_i),
\]
and each $g_i$ only needs to be a proper convex and lower semi-continuous function (it may be non-smooth or infinite at some $x_i$). In machine learning, a common choice of $f$ is the squared error $f(x) = \half\norm{Ax-b}^2$ (or an $\ell_2$-regularized variant to guarantee strong-convexity). The squared error is often paired with a scaled absolute value function $g_i(x_i) = \lambda|x_i|$ to yield a sparsity-encouraging $\ell_1$-regularization term. This is commonly known as the LASSO problem~\cite{tibshirani1996}. The $g_i$ can alternatively enforce bound constraints (e.g., the dual problem in support vector machine optimization~\cite{cortes1995}), such as the $x_i$ must be non-negative, by defining $g_i(x_i)$ to be an indicator function that is zero if the constraints are satisfied and $\infty$ otherwise.

One of most widely-used methods for minimizing functions of this form is the proximal gradient (PG) method~\cite{levitin1966constrained,beck2009,nesterov2013,bertsekas2015convex}, which uses an iteration update given by
\[
	x^{k+1} = \prox{\frac{1}{L} g} \left(x^k - \frac{1}{L} \nabla f(x^k) \right),
\]
where the proximal operator is defined as
\[
	\prox{\frac{1}{L} g}(x) = \argmin{y} \frac{1}{2} \| y - x\|^2 + \frac{1}{L}g(y).
\]
When the proximal gradient method is applied with non-negative constraints or $\ell_1$-regularization, an interesting property of the method is that the iterations $x^k$ will match the sparsity pattern of the solution $x^*$ for all sufficiently large $k$ (under a mild technical condition). Thus, after a finite number of iterations the algorithm ``identifies'' the final set of non-zero variables. This is useful if we are only using the algorithm to find the sparsity pattern, since it means we do not need to run the algorithm to convergence. It is also useful in designing faster algorithms (for example, see~\cite{krishnan2007,curtis2015,buchheim2016} for non-negativity constrained problems and~\cite{wen2010,byrd2015,santis2015}) for $\ell_1$-regularized problems). After we have identified the set of non-zero variables we could switch to a more sophisticated solver like Newton's method applied to the non-zero variables. In any case, we should expect the algorithm to converge faster after identifying the final sparsity pattern, since it will effectively be optimizing over a lower-dimensional subspace.

The idea of finitely identifying the set of non-zero variables dates back at least 40 years to the work of Bertsekas~\cite{bertsekas1976goldstein} who showed that the projected gradient method identifies the sparsity pattern in a finite number of iterations when using non-negative constraints (and suggests we could then switch to a superlinearly convergent unconstrained optimizer). Subsequent works have shown that finite identification occurs in much more general settings including cases where $g$ is non-separable, where $f$ may not be convex, and even where the constraints may not be convex~\cite{burke1988identification,wright1993identifiable,hare2004,hare2011}. The active-set identification property has also been shown for other algorithms like certain coordinate descent and stochastic gradient methods~\cite{mifflin2002,wright2012,lee2012manifold}.

Although these prior works show that the active-set identification must happen after some finite number of iterations, they only show that this happens asymptotically. In this work, we introduce the notion of the ``active-set complexity'' of an algorithm, which we define as the number of iterations required before an algorithm is guaranteed to have reached the active-set. 
We further give bounds, under the assumptions above and the standard nondegeneracy condition, on the active-set complexity of the proximal gradient method. 
We are only aware of one previous work giving such bounds, the work of Liang et al.\ who included a bound on the active-set complexity of the proximal gradient method~\cite[Proposition~3.6]{liang2017activity}. 
Unlike this work, their result does not evoke strong-convexity.  Instead, their work applies an inclusion condition on the local subdifferential of the regularization term that ours does not require.  By focusing on the strongly-convex
 case (which is common in machine learning due to the use of regularization), we obtain a simpler analysis and a much tighter bound than in this previous work. Specifically, both rates depend on the ``distance to the subdifferential boundary'', but in our analysis this term only appears inside of a logarithm rather than outside of it.

\section{Notation and assumptions}

We assume that $f$ is $\mu$-strongly convex so that for some $\mu > 0$, we have
\[
	f(y) \ge f(x) + \langle \nabla f(x), y - x \rangle + \frac{\mu}{2} \| y - x\|^2, \quad \text{for all $x,y \in \R^n$}.
\]
Further, we assume that its gradient $\nabla f$ is $L$-Lipschitz continuous, meaning that
\begin{equation}\label{eq:lipschitz}
	\| \nabla f(y) - \nabla f(x) \| \le L \| y - x\|, \quad \text{for all $x,y \in \R^n$}.
\end{equation}
By our separability assumption on $g$, the subdifferential of $g$ is simply the concatenation of the subdifferentials of each $g_i$. Further, the subdifferential of each individual $g_i$ at any $x_i \in \R$ is defined by
\[
	\partial g_i(x_i) = \{ v \in \R : g_i(y) \ge g_i(x_i) + v \cdot (y-x_i), \text{ for all $y \in \dom{g_i}$}\},
\]
which implies that the subdifferential of each $g_i$ is just an interval on the real line. In particular, the interior of the subdifferential of each $g_i$ at a non-differentiable point $x_i$ can be written as an open interval,
\begin{equation}
\label{eq:interior}
	 \interior \partial g_i(x_i) \equiv (l_i, u_i ),
\end{equation}
where $l_i \in \R \cup \{-\infty\}$ and $u_i \in \R \cup \{\infty\}$ (the $\infty$ values occur if $x_i$ is at its lower or upper bound, respectively).

As in existing literature on active-set identification~\cite{hare2004}, we require the {\em nondegeneracy} condition that $-\nabla f(x^*)$ must be in the ``relative interior'' of the subdifferential of $g$ at the solution $x^*$. For simplicity, we present the nondegeneracy condition for the special case of \eqref{eq:problem}.
\begin{assumption}\label{assump:nondegeneracy} We assume that $x^*$ is a nondegenerate solution for problem \eqref{eq:problem}, where $x^*$ is {\em nondegenerate} if and only if 
\[
\begin{cases}
	-\nabla_i f(x^*) \!=\!  \nabla_i g(x^*_i) &\!\!\!\!\text{if $\partial g_i(x_i^*)$ \!is a singleton \!($g_i$\! smooth at $x_i^*$)} \\
	-\nabla_i f(x^*) \!\in\! \interior \partial g_i(x^*_i) &\!\!\!\!\text{if $\partial g_i(x_i^*)$ \!is not a singleton \!($g_i$\! non-smooth at $x_i^*$)}.
\end{cases}
\]
\end{assumption}
Under this assumption, we ensure that $-\nabla f(x^*)$ is in the ``relative interior" (see \cite[Section 2.1.3]{boyd2004convex}) of the subdifferential of $g$ at the solution $x^*$. In the case of non-negative constraints, this requires that $\nabla_i f(x^*) > 0$ for all variables $i$ that are zero at the solution ($x_i^* = 0$). For $\ell_1$-regularization, this requires that $|\nabla_i f(x^*)| < \lambda$ for all variables $i$ that are zero at the solution, which is again a strict complementarity condition~\cite{santis2015}.\footnote{Note that $|\nabla_i f(x^*)| \leq \lambda$ for all $i$ with $x_i^* = 0$ follows from the optimality conditions, so this assumption simply rules out the case where $|\nabla_i f(x_i^*)|=\lambda$.}

\begin{definition}
The {\em active-set} $\mathcal{Z}$ for a separable $g$ is defined as 
\[
	\mathcal{Z} = \{ i : \partial g_i(x_i^*) \text{ is not a singleton} \}.
\]
\end{definition} 
By the above definition and recalling the interior of the subdifferential of $g_i$ as defined in \eqref{eq:interior}, the set $\mathcal{Z}$ includes indices $i$ where $x_i^*$ is equal to the lower bound on $x_i$, is equal to the upper bound on $x_i$, or occurs at a non-smooth value of $g_i$. In the case of non-negative constraints and $\ell_1$-regularization under Assumption 1, $\mathcal{Z}$ is the set of non-zero variables at the solution.
Formally, the {\em active-set identification property} for this problem is that for all sufficiently large $k$ we have that $x_i^k = x_i^*$ for all $i \in \mathcal{Z}$.
An important quantity in our analysis is the minimum distance to the nearest boundary of the subdifferential \eqref{eq:interior} among indices $i \in \mathcal{Z}$. This quantity is given by
\begin{equation}
\label{eq:Delta}
	 \delta =  \min_{i\in\mathcal{Z}}\left\{ \min\{ -\nabla_i f(x^*) - l_i, u_i + \nabla_i f(x^*) \}\right\}.
\end{equation}

\section{Finite-time active-set identification}

In this section we show that the PG method identifies the active-set of \eqref{eq:problem} in a finite number of iterations. Although this result follows from the more general results in the literature, by focusing on~\eqref{eq:problem} and the case of strong-convexity we give a substantially simpler proof that will allow us to easily bound the active-set iteration complexity of the method.

Before proceeding to our main contributions, we state the linear convergence rate of the proximal gradient method to the (unique) solution $x^*$.

\begin{theorem}\cite[Prop. 3]{schmidt2011convergence}\label{thm:convergence}
Consider problem \eqref{eq:problem}, where $f$ is $\mu$-strongly convex with $L$-Lipschitz continuous gradient, and the $g_i$ are proper convex and lower semi-continuous. 
Then for every iteration $k \ge 1$ of the proximal gradient method, we have
\begin{equation}
\label{eq:linConv}
\norm{x^k - x^*} \leq \left(1-\frac{1}{\kappa}\right)^k\norm{x^0 - x^*},
\end{equation}
where $\kappa := L/\mu$ is the condition number of $f$. 
\end{theorem}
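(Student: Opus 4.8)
The plan is to exhibit the proximal gradient iteration as the composition of a contractive gradient step with a non-expansive proximal map, and then to iterate the resulting contraction. First I would record that the unique minimizer $x^*$ of \eqref{eq:problem} (unique by strong convexity) is a fixed point of the iteration: by the definition of the proximal operator, the optimality condition $0 \in \nabla f(x^*) + \partial g(x^*)$ is equivalent to $x^* = \prox{\frac{1}{L}g}(x^* - \frac{1}{L}\nabla f(x^*))$. Writing $G(x) = x - \frac{1}{L}\nabla f(x)$ for the forward (gradient) step, the iteration reads $x^{k+1} = \prox{\frac{1}{L}g}(G(x^k))$, while $x^* = \prox{\frac{1}{L}g}(G(x^*))$.

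Second, I would invoke the standard fact that the proximal operator of a proper convex lower semi-continuous function is (firmly) non-expansive, so that $\norm{\prox{\frac{1}{L}g}(u) - \prox{\frac{1}{L}g}(v)} \le \norm{u - v}$. Subtracting the two fixed-point relations above and applying this gives
\[
\norm{x^{k+1} - x^*} \le \norm{G(x^k) - G(x^*)},
\]
so it remains only to bound the contraction factor of $G$.

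The crux, and the main obstacle, is to show $\norm{G(x) - G(y)} \le (1 - 1/\kappa)\norm{x - y}$ with the exact constant: naively combining strong convexity $\langle \nabla f(x) - \nabla f(y), x - y\rangle \ge \mu\norm{x-y}^2$ with co-coercivity yields only the weaker factor $\sqrt{1 - 1/\kappa}$, which does not match the stated rate. To extract the strong-convexity constant without loss, I would peel off the quadratic by setting $h = f - \frac{\mu}{2}\norm{\cdot}^2$, which is convex with $(L-\mu)$-Lipschitz gradient. Then $G(x) = (1 - \mu/L)x - \frac{1}{L}\nabla h(x)$, so with $u = x - y$ and $w = \nabla h(x) - \nabla h(y)$ we get
\[
\norm{G(x) - G(y)}^2 = \left(1 - \frac{\mu}{L}\right)^2\norm{u}^2 - \frac{2(1 - \mu/L)}{L}\langle w, u\rangle + \frac{1}{L^2}\norm{w}^2.
\]
Co-coercivity of $\nabla h$ gives $\langle w, u\rangle \ge \frac{1}{L-\mu}\norm{w}^2$, whence $\frac{1}{L^2}\norm{w}^2 \le \frac{2(1-\mu/L)}{L}\langle w, u\rangle$, so the last two terms are together non-positive. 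This leaves $\norm{G(x) - G(y)}^2 \le (1 - \mu/L)^2\norm{u}^2$, i.e. the claimed contraction with constant $1 - 1/\kappa$.

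Finally, chaining the two bounds gives $\norm{x^{k+1} - x^*} \le (1 - 1/\kappa)\norm{x^k - x^*}$; applying this $k$ times starting from $x^0$ yields \eqref{eq:linConv}. I expect the tight contraction of the gradient step to be the only delicate point, since it is precisely where the strong-convexity constant must be captured exactly to match the stated rate; the fixed-point characterization and the non-expansiveness of the prox are routine.
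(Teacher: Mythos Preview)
Your proof is correct. The paper itself does not prove Theorem~\ref{thm:convergence} directly; it quotes the result from~\cite{schmidt2011convergence} and later proves the more general Theorem~\ref{thm:generalstep} (arbitrary constant step-size $\alpha$) in the Appendix, from which the case $\alpha=1/L$ follows since $Q(1/L)=1-\mu/L$.

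At the structural level your argument and the paper's proof of Theorem~\ref{thm:generalstep} coincide: write the iteration as a forward gradient step followed by the proximal map, use that $x^*$ is a fixed point, apply non-expansiveness of the prox, and then bound the contraction of the forward step. The only difference is in how the gradient-step contraction is extracted. You peel off the quadratic, set $h=f-\tfrac{\mu}{2}\|\cdot\|^2$, and apply co-coercivity (Baillon--Haddad) to $\nabla h$; the paper instead invokes Nesterov's combined inequality
\[
\langle \nabla f(x)-\nabla f(y),\,x-y\rangle \;\ge\; \tfrac{1}{L+\mu}\|\nabla f(x)-\nabla f(y)\|^2 + \tfrac{L\mu}{L+\mu}\|x-y\|^2
\]
and expands $\|G(x)-G(y)\|^2$ directly. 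The two devices are equivalent (Nesterov's inequality is precisely co-coercivity of $\nabla h$ rewritten), so neither is more elementary. The practical difference is scope: the paper's computation handles all $\alpha\in(0,2/L)$ at once and produces the rate $\max\{|1-\alpha L|,|1-\alpha\mu|\}$, whereas your decomposition is tuned to $\alpha=1/L$ (the cancellation that kills the cross terms relies on the factor $1-\mu/L$ matching $(L-\mu)/L$).
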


Next, we state the finite active-set identification result. Our argument essentially states that $\norm{x^k - x^*}$ is eventually always less then $\delta/2L$, where $\delta$ is defined as in \eqref{eq:Delta}, and at this point the algorithm always sets $x_i^k$ to $x_i^*$ for all $i \in \mathcal{Z}$.

\begin{lemma}\label{lem:activeset}
Consider problem \eqref{eq:problem}, where $f$ is $\mu$-strongly convex with $L$-Lipschitz continuous gradient, and the $g_i$ are proper convex and lower semi-continuous. 
Let Assumption \ref{assump:nondegeneracy} hold for the solution $x^*$. Then for any proximal gradient method with a step-size of $1/L$, there exists a $\bar{k}$ such that for all $k > \bar{k}$ we have $x_i^k = x_i^*$ for all $i \in \mathcal{Z}$.
\end{lemma}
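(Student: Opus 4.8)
The plan is to use the separability of $g$ to reduce the proximal step to $n$ independent scalar problems, and to show that once the iterate enters a ball of radius $\delta/(2L)$ around $x^*$, the proximal map pins every coordinate $i\in\mathcal{Z}$ exactly to $x_i^*$. Writing $z^k := x^k - \frac{1}{L}\nabla f(x^k)$ for the gradient-step point, separability gives $x_i^{k+1} = \prox{\frac{1}{L}g_i}(z_i^k)$ coordinatewise. The scalar subproblem defining the prox is $1$-strongly convex, so it has a unique minimizer, and its optimality condition shows that $x_i^{k+1}=x_i^*$ holds exactly when $L(z_i^k - x_i^*)\in\partial g_i(x_i^*)$. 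Since $\partial g_i(x_i^*)$ is the closed interval whose interior is $(l_i,u_i)$ from \eqref{eq:interior}, the whole claim reduces to verifying a membership on the real line.

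The next step is to control $L(z_i^k - x_i^*)$. I would write it as $-\nabla_i f(x^*) + e_i^k$, where $e_i^k := L(x_i^k - x_i^*) - (\nabla_i f(x^k) - \nabla_i f(x^*))$ is the error introduced by replacing the gradient at $x^*$ with the gradient at $x^k$. Assumption \ref{assump:nondegeneracy} places $-\nabla_i f(x^*)$ in the open interval $(l_i,u_i)$ for each $i\in\mathcal{Z}$, and the definition of $\delta$ in \eqref{eq:Delta} sharpens this to
\[
-\nabla_i f(x^*)\in[\,l_i+\delta,\;u_i-\delta\,]\quad\text{for all } i\in\mathcal{Z}.
\]
Consequently, as long as $|e_i^k|\le\delta$, the perturbed quantity $L(z_i^k-x_i^*)$ stays inside $[l_i,u_i]=\partial g_i(x_i^*)$, which is exactly what forces $x_i^{k+1}=x_i^*$.

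Bounding $e_i^k$ is the routine part. Using $|e_i^k|\le\norm{e^k}$, the triangle inequality, and the $L$-Lipschitz continuity \eqref{eq:lipschitz} of $\nabla f$,
\[
|e_i^k|\le\norm{e^k}\le L\norm{x^k-x^*}+\norm{\nabla f(x^k)-\nabla f(x^*)}\le 2L\norm{x^k-x^*}.
\]
Hence $\norm{x^k-x^*}\le\delta/(2L)$ implies $|e_i^k|\le\delta$ for every $i\in\mathcal{Z}$ and therefore $x_i^{k+1}=x_i^*$ on the active set. Finally I would invoke the linear rate of Theorem \ref{thm:convergence}: since $(1-1/\kappa)^k\norm{x^0-x^*}\to 0$, there is a $\bar k$ such that $\norm{x^k-x^*}\le\delta/(2L)$ for all $k\ge\bar k$, giving $x_i^k=x_i^*$ for all $i\in\mathcal{Z}$ and all $k>\bar k$.

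The step I expect to be most delicate is the prox-inclusion characterization together with the matching of margins: one must correctly recognize that the proximal map fixes coordinate $i$ at $x_i^*$ over the full closed subdifferential interval, and then line up the $\delta$-margin coming from nondegeneracy against the factor-$2L$ Lipschitz bound on $e_i^k$. Once that bookkeeping is set up, the estimate on $e_i^k$ and the appeal to linear convergence are mechanical.
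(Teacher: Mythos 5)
Your proposal is correct and follows essentially the same route as the paper's own proof: the same coordinatewise prox optimality condition, the same $\delta/(2L)$ identification threshold obtained by combining the iterate bound with the $L$-Lipschitz gradient bound, and the same appeal to Theorem~\ref{thm:convergence} to get a finite $\bar{k}$. Your packaging of the two error terms into a single perturbation $e_i^k$ with $\norm{e^k}\le 2L\norm{x^k-x^*}$, measured against the symmetric margin $-\nabla_i f(x^*)\in[l_i+\delta,\,u_i-\delta]$, is just a cleaner bookkeeping of the paper's two one-sided inequality chains.
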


\begin{proof} 
By the definition of the proximal gradient step and the separability of $g$, for all $i$ we have
\[
	x_i^{k+1} \in \argmin{y}\left\{ \frac{1}{2}\left| y - \left(x^k_i -  \frac{1}{L}\nabla_i f(x^k)\right) \right|^2 + \frac{1}{L}g_i(y)\right\}.
\]
This problem is strongly-convex, and its unique solution satisfies
\[
0 \in y - x_i^k + \frac{1}{L} \nabla_i f(x^k) + \frac{1}{L}\partial g_i(y),
\]
or equivalently that
\begin{equation}
\label{eq:proxSol}
L(x_i^k - y) - \nabla_i f(x^k) \in \partial g_i(y).
\end{equation}
By Theorem~\ref{thm:convergence}, there exists a minimum finite iterate $\bar{k}$ such that $\norm{x^{\bar{k}}~-~x^*} \leq \delta/2L$. Since $|x_i^k - x_i^*| \leq \norm{x^k - x^*}$, this implies that for all $k \geq \bar{k}$ we have 
\begin{equation}
\label{eq:itBound}
-\delta/2L \leq x_i^k - x_i^* \leq \delta/2L, \quad \text{for all $i$.}
\end{equation}
Further, the Lipschitz continuity of $\nabla f$ in \eqref{eq:lipschitz} implies that we also have
\begin{align*}
|\nabla_i f(x^k) - \nabla_i f(x^*)| & \leq \norm{\nabla f(x^k) - \nabla f(x^*)}\\
& \leq L\norm{x^k - x^*}\\
& \leq \delta/2,
\end{align*}
which implies that
\begin{equation}
\label{eq:gBound}
-\delta/2 - \nabla_i f(x^*) \leq - \nabla_i f(x^k) \leq \delta/2 - \nabla_i f(x^*).
\end{equation}
To complete the proof it is sufficient to show that for any $k \geq \bar{k}$ and $i \in \mathcal{Z}$ that $y = x_i^*$ satisfies \eqref{eq:proxSol}. Since the solution to~\eqref{eq:proxSol} is unique, this will imply the desired result. We first show that the left-side is less than the upper limit  $u_i$ of the interval $\partial g_i(x_i^*)$,
\begin{align*}
L(x_i^k - x_i^*) - \nabla_i f(x^k) & \leq \delta/2 - \nabla_i f(x^k) & \text{(right-side of~\eqref{eq:itBound})}\\
& \leq \delta - \nabla_i f(x^*) & \text{(right-side of~\eqref{eq:gBound})}\\
& \leq (u_i + \nabla_i f(x^*)) - \nabla_i f (x^*) & \text{(definition of $\delta$,~\eqref{eq:Delta})}\\
& \leq u_i.
\end{align*}
We can use the left-sides of~\eqref{eq:itBound} and~\eqref{eq:gBound} and an analogous sequence of inequalities to show that $L(x_i^k~-~x_i^*)~-~\nabla_i f(x^k) \geq l_i$, implying that $x_i^*$ solves~\eqref{eq:proxSol}. \qed
\end{proof}

\section{Active-set complexity}
\label{sec:maniRate}

The active-set identification property shown in the previous section could also be shown using the more sophisticated tools used in related works~\cite{burke1988identification,hare2004}. However, an appealing aspect of the simple argument above is that it is clear how to bound the active-set complexity of the method. We formalize this in the following result.
\begin{corollary}\label{cor:complexity}
Consider problem \eqref{eq:problem}, where $f$ is $\mu$-strongly convex with $L$-Lipschitz continuous gradient, and the $g_i$ are proper convex and lower semi-continuous. 
Let Assumption \ref{assump:nondegeneracy} hold for the solution $x^*$. Then the proximal gradient method with a step-size of $1/L$ identifies the active-set  after at most $\kappa \log(2L\norm{x^0 -  x^*}/\delta)$ iterations.
\end{corollary}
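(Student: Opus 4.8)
The plan is to combine the finite-identification criterion extracted from the proof of Lemma~\ref{lem:activeset} with the explicit linear convergence rate of Theorem~\ref{thm:convergence}. The proof of Lemma~\ref{lem:activeset} shows that the proximal gradient method has identified the active-set as soon as $\norm{x^k - x^*} \le \delta/2L$. Hence it suffices to count how many iterations the linear rate needs in order to drive the initial distance $\norm{x^0 - x^*}$ below the threshold $\delta/2L$; that count is an upper bound on the identification iteration $\bar{k}$.

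Concretely, I would substitute the rate~\eqref{eq:linConv} into the criterion and seek the smallest $k$ for which
\[
\left(1 - \frac{1}{\kappa}\right)^k \norm{x^0 - x^*} \le \frac{\delta}{2L}.
\]
Taking logarithms and rearranging (noting that $\log(1 - 1/\kappa) < 0$, so dividing by it reverses the inequality) gives the sufficient condition
\[
k \ge \frac{\log\!\left(2L\norm{x^0 - x^*}/\delta\right)}{-\log(1 - 1/\kappa)}.
\]

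To reach the clean stated bound, I would bound the denominator below using the elementary inequality $-\log(1 - 1/\kappa) \ge 1/\kappa$ (which follows from $\log(1+t) \le t$ with $t = -1/\kappa$), giving
\[
\frac{\log\!\left(2L\norm{x^0 - x^*}/\delta\right)}{-\log(1 - 1/\kappa)} \le \kappa \log\!\left(2L\norm{x^0 - x^*}/\delta\right).
\]
Thus $\kappa \log(2L\norm{x^0 - x^*}/\delta)$ iterations suffice to guarantee $\norm{x^k - x^*} \le \delta/2L$, and hence active-set identification. The computation is essentially routine; the only points requiring care are the direction of the inequalities when dividing through by the negative quantity $\log(1 - 1/\kappa)$, and choosing the logarithm bound so that the substitution \emph{weakens} rather than tightens the requirement on $k$. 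I would also observe that $\delta > 0$ under Assumption~\ref{assump:nondegeneracy}, so the logarithm is well defined (and if the argument $2L\norm{x^0-x^*}/\delta$ falls below $1$, identification already holds at the starting point).
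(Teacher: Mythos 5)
Your proposal is correct and follows essentially the same route as the paper: both reduce to the criterion $\norm{x^k - x^*} \le \delta/2L$ from the proof of Lemma~\ref{lem:activeset} and then invoke the bound $1 - 1/\kappa \le e^{-1/\kappa}$ (the paper applies it as $(1-1/\kappa)^k \le \exp(-k/\kappa)$ before taking logarithms, you apply it as $-\log(1-1/\kappa) \ge 1/\kappa$ after, which is the same inequality). Your added remarks on the sign of the logarithm and the degenerate case where the argument of the log is below $1$ are careful touches, but the argument is the paper's.
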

\begin{proof}
Using Theorem~\ref{thm:convergence}
and $(1 - 1/\kappa)^k \leq \exp(-k/\kappa)$, we have 
\[
	\norm{x^k - x^*} \leq \exp(-k/\kappa)\norm{x^0 - x^*}.
\]
The proof of Lemma \ref{lem:activeset} shows that the active-set identification occurs whenever the inequality $\norm{x^k - x^*} \leq \delta/2L$ is satisfied. For this to be satisfied, it is sufficient to have
\[
\exp(-k/\kappa)\norm{x^0 - x^*} \le \frac{\delta}{2L}.
\]
Taking the $\log$ of both sides and solving for $k$ gives the result. \qed
\end{proof}

It is interesting to note that this bound only depends logarithmically on $1/\delta$, and that if $\delta$ is quite large we can expect to identify the active-set very quickly. 
This $O(\log(1/\delta))$ dependence is in contrast to the previous result of Liang et al.\ who give a bound of the form $O(1/\sum_{i=1}^n \delta_i^2)$ where $\delta_i$ is the distance of $\nabla_i f$  to the boundary of the subdifferential $\partial g_i$ at $x^*$~\cite[Proposition~3.6]{liang2017activity}. Thus, our bound will typically be tighter as it only depends logarithmically on the single smallest $\delta_i$ (though we make the extra assumption of strong-convexity).
In Section~\ref{sec:motivation}, we considered two specific cases of problem \eqref{eq:problem}, for which we can define $\delta$:
\begin{enumerate}
	\item If the $g_i$ enforce non-negativity constraints, then $\delta = \min_{i \in \mathcal{Z}} \nabla_i f(x^*)$. 
	\item If $g$ is a scaled $\ell_1$-regularizer, then $\delta = \lambda - \max_{i \in \mathcal{Z}}|\nabla_i f(x^*)|$.
\end{enumerate}

In the first case we identify the non-zero variables after $\kappa \log(2L\norm{x^0 - x^*}/\min_{i \in \mathcal{Z}}\nabla_i f(x^*))$ iterations. If the minimum gradient over the active-set at the solution $\delta$ is zero, then we may approach the active-set through the interior of the constraint and the active-set may never be identified (this is the purpose of the nondegeneracy condition). Similarly, for $\ell_1$-regularization this result also gives an upper bound on how long it takes to identify the sparsity pattern.

Above we have bounded the number of iterations before $x_i^k = x_i^*$ for all $i \in \mathcal{Z}$. However, in the non-negative and L1-regularized applications we might also be interested in the number of iterations before we always have $x_i^k \neq 0$ for all $i \not\in\mathcal{Z}$. More generally, the number of iterations before $x_i^k$ for $i\not\in\mathcal{Z}$ are not located at non-smooth or boundary values. It is straightforward to bound this quantity. Let $\Delta = \min_{i \not\in Z}\{|x_i^n - x_i^*|\}$ where $x_i^n$ is the nearest non-smooth or boundary value along dimension $i$. Since~\eqref{eq:linConv} shows that the proximal-gradient method contracts the distance to $x^*$, it cannot set values $x_i^k$ for $i \not\in\mathcal{Z}$ to non-smooth or boundary values once $\norm{x^k - x^*} \leq \Delta$. It follows from~\eqref{eq:linConv} that $\kappa\log(\norm{x^0 - x^*}/\Delta)$ iterations are needed for the values $i \not\in\mathcal{Z}$ to only occur at smooth/non-boundary values.

\section{General step-size}

The previous sections considered a step-size of $1/L$.
In this section we extend our results to handle general constant step-sizes, which leads to a smaller active-set complexity if we use a larger step-size depending on $\mu$.
To do this, we require the following result, which states the generalized convergence rate bound for the proximal gradient method. This result matches the known rate of the gradient method with a constant step-size for solving strictly-convex quadratic problems~\cite[\S 1.3]{bertsekas1999nonlinear}, and the rate of the projected-gradient algorithm with a constant step-size for minimizing strictly-convex quadratic functions over convex sets~\cite[\S 2.3]{bertsekas1999nonlinear}.

\begin{theorem}\label{thm:generalstep}
Consider problem~\eqref{eq:problem}, where $f$ is $\mu$-strongly convex with $L$-Lipschitz continuous gradient, and $g$ is proper convex and lower semi-continuous. 
Then for every iteration $k \ge 1$ of the proximal gradient method with a constant step-size $\alpha > 0$, we have
\begin{equation}
\label{eq:linConv2}
\norm{x^k - x^*} \leq Q(\alpha)^k\norm{x^0 - x^*},
\end{equation}
where $Q(\alpha) := \max \{ |1 - \alpha L|, |1 - \alpha \mu |\}$.
\end{theorem}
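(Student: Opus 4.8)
The plan is to exploit the (firm) nonexpansiveness of the proximal operator together with the fact that $x^*$ is a fixed point of the proximal gradient map. First I would record the fixed-point characterization: since $x^*$ minimizes $f+g$, the optimality conditions give $0 \in \nabla f(x^*) + \partial g(x^*)$, which is precisely the statement that $x^* = \prox{\alpha g}(x^* - \alpha\nabla f(x^*))$ for every $\alpha > 0$. Writing the iteration as $x^{k+1} = \prox{\alpha g}(x^k - \alpha\nabla f(x^k))$, subtracting these two identities, and using that $\prox{\alpha g}$ is nonexpansive (which holds because $g$ is proper, convex, and lower semi-continuous) immediately yields
\[
\norm{x^{k+1} - x^*} \leq \norm{(x^k - \alpha\nabla f(x^k)) - (x^* - \alpha\nabla f(x^*))}.
\]
This reduces the entire problem to controlling the pure gradient-step map $T_\alpha(x) := x - \alpha\nabla f(x)$ evaluated at $x^k$ and $x^*$.

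The core of the argument is then to show that $T_\alpha$ is a contraction with modulus exactly $Q(\alpha)$, that is, $\norm{T_\alpha(x^k) - T_\alpha(x^*)} \leq Q(\alpha)\norm{x^k - x^*}$. Setting $d = x^k - x^*$ and $h = \nabla f(x^k) - \nabla f(x^*)$, I would expand
\[
\norm{d - \alpha h}^2 = \norm{d}^2 - 2\alpha\langle d, h\rangle + \alpha^2\norm{h}^2,
\]
and bound the cross term from below using the tight combined strong-convexity/Lipschitz inequality $\langle d, h\rangle \geq \frac{\mu L}{\mu+L}\norm{d}^2 + \frac{1}{\mu+L}\norm{h}^2$ (the sharp consequence of $\mu$-strong convexity and $L$-Lipschitzness of $\nabla f$). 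Substituting leaves an expression that is affine in $\norm{h}^2$ with coefficient $\alpha(\alpha - 2/(\mu+L))$. I would then split into the two regimes $\alpha \lessgtr 2/(\mu+L)$ and use the complementary bounds $\mu\norm{d} \leq \norm{h} \leq L\norm{d}$ to push $\norm{h}^2$ to the active endpoint: the small-step regime collapses to the factor $(1-\alpha\mu)^2$ and the large-step regime to $(1-\alpha L)^2$, which together give exactly $Q(\alpha)^2 = \max\{(1-\alpha\mu)^2,\,(1-\alpha L)^2\}$.

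Once the one-step contraction $\norm{x^{k+1} - x^*} \leq Q(\alpha)\norm{x^k - x^*}$ is established, iterating it $k$ times yields \eqref{eq:linConv2} directly.

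The main obstacle I anticipate is this contraction estimate for $T_\alpha$: obtaining the sharp factor $Q(\alpha)$ rather than a looser one hinges on using the combined inequality rather than strong monotonicity and co-coercivity separately, and on correctly identifying which endpoint of $[\mu,L]$ is active in each step-size regime (with the crossover occurring precisely at $\alpha = 2/(\mu+L)$, where $|1-\alpha\mu| = |1-\alpha L|$). If one is willing to assume $f$ is twice differentiable, a cleaner alternative bypasses the case analysis: write $h = \bigl(\int_0^1 \nabla^2 f(x^* + t d)\,dt\bigr)d =: Hd$ with $\mu I \preceq H \preceq L I$, so that $\norm{d - \alpha h} = \norm{(I - \alpha H)d} \leq \max_{\lambda\in[\mu,L]}|1-\alpha\lambda|\,\norm{d}$, where the maximum is attained at an endpoint by convexity of $\lambda \mapsto |1-\alpha\lambda|$, again giving $Q(\alpha)$.
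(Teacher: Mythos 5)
Your proposal is correct and follows essentially the same route as the paper's proof: nonexpansiveness of $\prox{\alpha g}$ plus the fixed-point property of $x^*$ to reduce to the gradient-step map, expansion of the squared norm, the sharp combined strong-convexity/Lipschitz inequality for the cross term, and then pushing $\norm{\nabla f(x^k)-\nabla f(x^*)}^2$ to the endpoint $\mu^2\norm{x^k-x^*}^2$ or $L^2\norm{x^k-x^*}^2$ according to the sign of the coefficient $\alpha(\alpha - 2/(L+\mu))$. The paper merely packages your two-regime case analysis into the single inequality $\beta\norm{\nabla f(x)-\nabla f(y)}^2 \le \max\{\beta L^2, \beta\mu^2\}\norm{x-y}^2$, valid for $\beta$ of either sign, which is the same argument.
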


We give the proof in the Appendix. Theorem~\ref{thm:convergence} is a special case of Theorem~\ref{thm:generalstep} since $Q(1/ L) = 1 - \mu / L$. Further, Theorem~\ref{thm:generalstep} gives a faster rate if we minimize $Q$ in terms of $\alpha$ to give $\alpha = 2/(L + \mu)$, which yields a faster rate of 
\[
Q\left(\frac{2}{L+\mu}\right)= 1 - \frac{2\mu}{L + \mu} = \frac{L - \mu}{L + \mu}.
\]
 This faster convergence rate for the proximal gradient method may be of independent interest in other settings, and we note that this result does not require $g$ to be separable. We also note that, although the theorem is true for any positive $\alpha$, it is only interesting for $\alpha < 2/L$ since for $\alpha \geq 2/L$ it does not imply convergence.

\begin{lemma}\label{lem:2}
Consider problem \eqref{eq:problem}, where $f$ is $\mu$-strongly convex with $L$-Lipschitz continuous gradient, and the $g_i$ are proper convex and lower semi-continuous.
Let Assumption \ref{assump:nondegeneracy} hold for the solution $x^*$. Then for any proximal gradient method with a constant step-size $0 < \alpha < 2/L$, there exists a $\bar{k}$ such that for all $k > \bar{k}$ we have $x_i^k = x_i^*$ for all $i \in \mathcal{Z}$.
\end{lemma}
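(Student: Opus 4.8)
The plan is to mirror the argument of Lemma~\ref{lem:activeset} almost verbatim, replacing the step-size $1/L$ by the general $\alpha$ and replacing Theorem~\ref{thm:convergence} by Theorem~\ref{thm:generalstep}. First I would write the proximal gradient step with step-size $\alpha$ componentwise,
\[
x_i^{k+1} \in \argmin{y}\left\{ \frac{1}{2}\left| y - \left(x_i^k - \alpha \nabla_i f(x^k)\right)\right|^2 + \alpha g_i(y)\right\},
\]
whose unique minimizer is characterized by the optimality condition
\[
\frac{1}{\alpha}(x_i^k - y) - \nabla_i f(x^k) \in \partial g_i(y),
\]
which is exactly the analogue of~\eqref{eq:proxSol} with $L$ replaced by $1/\alpha$.

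Next I would invoke Theorem~\ref{thm:generalstep}. Because we assume $0 < \alpha < 2/L$, the contraction factor satisfies $Q(\alpha) = \max\{|1-\alpha L|,\,|1-\alpha\mu|\} < 1$, so $\norm{x^k - x^*} \to 0$ monotonically. Hence there exists a finite $\bar{k}$ with $\norm{x^{\bar{k}} - x^*} \le \alpha\delta/(1+\alpha L)$, and monotone contraction guarantees this inequality persists for all $k \ge \bar{k}$. This threshold is the key change from Lemma~\ref{lem:activeset}: it specializes to the earlier $\delta/(2L)$ when $\alpha = 1/L$, and it is chosen precisely so that the subsequent inequalities close.

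Then, for $i \in \mathcal{Z}$, I would bound the left-hand side of the optimality condition evaluated at $y = x_i^*$. Using $|x_i^k - x_i^*| \le \norm{x^k - x^*}$ together with the $L$-Lipschitz bound~\eqref{eq:lipschitz} in the form $|\nabla_i f(x^k) - \nabla_i f(x^*)| \le L\norm{x^k - x^*}$, I obtain
\[
\frac{1}{\alpha}(x_i^k - x_i^*) - \nabla_i f(x^k) \le \left(\tfrac{1}{\alpha} + L\right)\norm{x^k - x^*} - \nabla_i f(x^*) \le \delta - \nabla_i f(x^*) \le u_i,
\]
where the middle inequality uses the threshold on $\norm{x^k - x^*}$ (since $(\tfrac{1}{\alpha}+L)\cdot\tfrac{\alpha\delta}{1+\alpha L} = \delta$) and the last step uses the definition of $\delta$ in~\eqref{eq:Delta}. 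A symmetric chain using the lower bounds yields $\frac{1}{\alpha}(x_i^k - x_i^*) - \nabla_i f(x^k) \ge l_i$. Thus $y = x_i^*$ satisfies the optimality condition, and since that condition has a unique solution I conclude $x_i^{k+1} = x_i^*$ for all $i \in \mathcal{Z}$ and all $k \ge \bar{k}$, which is the claim.

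The only genuine obstacle here is conceptual rather than computational: I must confirm that the restriction $\alpha < 2/L$ is exactly what forces $Q(\alpha) < 1$ and hence a contraction (for $\alpha \ge 2/L$ convergence, and therefore identification, can fail), and I must recompute the identification radius as $\alpha\delta/(1+\alpha L)$ so that the combined factor $\tfrac{1}{\alpha}+L$ arising from the step-size term and the Lipschitz term is absorbed cleanly by $\delta$. Everything else is a routine re-derivation of the inequalities already carried out in Lemma~\ref{lem:activeset}.
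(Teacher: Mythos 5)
Your proof is correct and follows essentially the same route as the paper's: the componentwise prox optimality condition $\frac{1}{\alpha}(x_i^k - y) - \nabla_i f(x^k) \in \partial g_i(y)$, the contraction from Theorem~\ref{thm:generalstep} to enter a neighborhood of $x^*$ that persists for all later iterations, then the inequality chain closed by the definition of $\delta$ and the uniqueness of the prox solution. The only difference is quantitative: your identification radius $\alpha\delta/(1+\alpha L)$ is slightly sharper than the paper's $\alpha\delta/3$ (since $1+\alpha L < 3$ whenever $\alpha < 2/L$, the paper uses the cruder radius and absorbs the leftover factor $(1+\alpha L)/3 < 1$ using the step-size bound), so your choice would even yield a marginally better constant in Corollary~\ref{cor:2}.
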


We give the proof Lemma~\ref{lem:2} in the Appendix, which shows that we identify the active-set when $\| x^k - x^* \| \le \delta \alpha/3$ is satisfied. Using this result, we prove the following active-set complexity result for proximal gradient methods when using a general fixed step-size (the proof is once again found in the Appendix).

\begin{corollary}\label{cor:2}
Consider problem \eqref{eq:problem}, where $f$ is $\mu$-strongly convex with $L$-Lipschitz continuous gradient (for $\mu < L$), and the $g_i$ are proper convex and lower semi-continuous. 
Let Assumption \ref{assump:nondegeneracy} hold for the solution $x^*$. Then for any proximal gradient method with a constant step-size $\alpha$, such that $0 < \alpha < 2/L$, the active-set will be identified after at most $ \frac{1}{\log (1/Q(\alpha))} \log(3 || x^0 - x^*||/ (\delta \alpha))$ iterations.
\end{corollary}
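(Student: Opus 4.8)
The plan is to reuse the template of the proof of Corollary~\ref{cor:complexity}, replacing the $1/L$-specific ingredients by their general-step-size counterparts: the rate bound of Theorem~\ref{thm:generalstep} in place of Theorem~\ref{thm:convergence}, and the identification threshold of Lemma~\ref{lem:2} in place of that of Lemma~\ref{lem:activeset}. Concretely, Theorem~\ref{thm:generalstep} gives $\norm{x^k - x^*} \le Q(\alpha)^k\norm{x^0 - x^*}$, while Lemma~\ref{lem:2} guarantees that the active-set has been identified once $\norm{x^k - x^*} \le \delta\alpha/3$. It therefore suffices to find the smallest $k$ for which $Q(\alpha)^k\norm{x^0 - x^*} \le \delta\alpha/3$, and then read off the iteration count.

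The step that requires genuine care is checking that $Q(\alpha) \in (0,1)$, since this is what makes the subsequent logarithm both legitimate and positive. Here the two hypotheses on $\alpha$ and $\mu$ enter exactly once each: from $0 < \alpha < 2/L$ we get $0 < \alpha L < 2$ and hence $|1 - \alpha L| < 1$, and from $\mu < L$ we get $\alpha < 2/L < 2/\mu$, whence $0 < \alpha\mu < 2$ and $|1 - \alpha\mu| < 1$. Taking the maximum gives $Q(\alpha) < 1$, so $\log(1/Q(\alpha)) > 0$ and the bound will be a finite positive number; dropping $\mu < L$ would allow the denominator to degenerate and render the statement vacuous.

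What remains is an algebraic rearrangement. Taking logarithms of $Q(\alpha)^k\norm{x^0 - x^*} \le \delta\alpha/3$ yields $k\log Q(\alpha) \le \log(\delta\alpha/(3\norm{x^0 - x^*}))$, and dividing by the negative quantity $\log Q(\alpha)$ reverses the inequality. Using $-\log Q(\alpha) = \log(1/Q(\alpha))$ and $-\log(\delta\alpha/(3\norm{x^0-x^*})) = \log(3\norm{x^0-x^*}/(\delta\alpha))$, this becomes $k \ge \log(3\norm{x^0 - x^*}/(\delta\alpha))/\log(1/Q(\alpha))$, which is the claimed bound. I anticipate no real obstacle: all the analysis has already been done in Theorem~\ref{thm:generalstep} and Lemma~\ref{lem:2}, so the only thing to get right is the sign bookkeeping in the division by $\log Q(\alpha) < 0$.
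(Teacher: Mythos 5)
Your proposal is correct and follows essentially the same route as the paper's proof: combine the rate bound of Theorem~\ref{thm:generalstep} with the identification threshold $\delta\alpha/3$ from Lemma~\ref{lem:2}, then rearrange via logarithms, carefully flipping the inequality when dividing by $\log Q(\alpha) < 0$. Your explicit verification that $\mu < L$ and $0 < \alpha < 2/L$ force $Q(\alpha) \in (0,1)$ is a welcome detail the paper only asserts in passing, but it does not change the argument.
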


Finally, we note that as part of a subsequent work we have analyzed the active-set complexity of block coordinate descent methods~\cite{nutini2017}. The argument in that case is similar to the argument presented here. The main modification needed to handle coordinate-wise updates is that we must use a coordinate selection strategy that guarantees that we eventually select all $i \in \mathcal{Z}$ that are not at their optimal values for some finite $k \ge \bar{k}$.


\begin{acknowledgements}
The authors would like to express their thanks to the anonymous referees for their valuable feedback.
\end{acknowledgements}

\section*{Appendix}

{\it Proof of Theorem~\ref{thm:generalstep}}. 
For any $\alpha > 0$, by the non-expansiveness of the proximal operator~\cite[Lem 2.4]{combettes2005} and the fact that $x^*$ is a fixed point of the proximal gradient update for any $\alpha > 0$, we have
\begin{align*}
&\| x^{k+1} - x^*\|^2 \\
&~~= \| \prox{\alpha g}(x^k - \alpha \nabla f(x^k)) - \prox{\alpha g}(x^k - \alpha \nabla f(x^*)) \|^2 \\
&~~\le \| (x^k - \alpha \nabla f(x^k)) - (x^* - \alpha \nabla f(x^*)) \|^2 \\
&~~= \| x^k - x^* - \alpha (\nabla f(x^k) - \nabla f(x^*)) \|^2 \\
&~~= \| x^k - x^* \|^2 - 2 \alpha \langle \nabla f(x^k) - \nabla f(x^*), x^k - x^* \rangle + \alpha^2 \| \nabla f(x^k) - \nabla f(x^*) \|^2.
\end{align*}
By the $L$-Lipschitz continuity of $\nabla f$ and the $\mu$-strong convexity of $f$, we have~\cite[Thm 2.1.12]{Nes04b}
\[
	\langle \nabla f(x^k) - \nabla f(x^*), x^k - x^* \rangle 
	\ge \frac{1}{L+\mu} \| \nabla f(x^k) - \nabla f(x^*) \|^2 + \frac{L \mu}{L + \mu} \| x^k - x^* \|^2,
\]
which yields
\[
\| x^{k+1} - x^*\|^2 
\le \left ( 1 - \frac{2 \alpha L \mu}{L + \mu} \right ) \| x^k - x^* \|^2 + \alpha \left ( \alpha - \frac{2}{L + \mu} \right ) \| \nabla f(x^k) - \nabla f(x^*) \|^2.
\]
Further, by the $\mu$-strong convexity of $f$, we have for any $x, y \in \R^n$~\cite[Thm 2.1.17]{Nes04b},
\[
\langle \nabla f(x) - \nabla f(y)\rangle \ge \mu\|x  - y\|^2,
\]
which by Cauchy-Schwartz gives
\[
	\| \nabla f(x) - \nabla f(y) \| \ge \mu \| x - y \|.
\]
Combining this with the $L$-Lipschitz continuity condition in \eqref{eq:lipschitz}
shows that $\mu \leq L$.  Therefore, for any $\beta \in \R$ (positive or negative) we have
\[
	\beta \| \nabla f(x) - \nabla f(y) \|^2 \le \max \{ \beta L^2, \beta \mu^2 \} \|x - y \|^2.
\]
Thus, for $\beta := \left ( \alpha - \frac{2}{L + \mu}\right )$, we have
\begin{align*}
&\| x^{k+1} - x^*\|^2 \\
&~~\le \left ( 1 - \frac{2 \alpha L \mu}{L + \mu} \right ) \| x^k - x^* \|^2 + \alpha \max \left \{ L^2 \beta, \mu^2 \beta \right \} \| x^k - x^* \|^2 \\
&~~= \max \left \{ \left ( 1 - \frac{2 \alpha L \mu}{L + \mu} \right ) + \alpha L^2 \beta, \left ( 1 - \frac{2 \alpha L \mu}{L + \mu} \right ) + \alpha \mu^2 \beta \right \} \| x^k - x^* \|^2 \\
&~~= \max \left \{ 1 - \frac{2 \alpha L (L + \mu)}{L + \mu} + \alpha^2 L^2, 1 - \frac{2 \alpha \mu (L + \mu)}{L + \mu} + \alpha^2 \mu^2 \right \} \| x^k - x^* \|^2 \\
&~~= \max \left \{ (1 - \alpha L)^2, (1 - \alpha \mu)^2\right \} \| x^k - x^* \|^2 \\
&~~= Q(\alpha)^2 \| x^k - x^* \|^2.
\end{align*}
Taking the square root and applying it repeatedly, we obtain our result. \qed
\bigskip 
\noindent {\it Proof of Lemma~\ref{lem:2}}. 
By the definition of the proximal gradient step and the separability of $g$, for all $i$ we have
\[
	x_i^{k+1} \in \argmin{y}\left\{ \frac{1}{2}\left| y - \left(x^k_i -  \alpha \nabla_i f(x^k)\right) \right|^2 + \alpha g_i(y)\right\}.
\]
This problem is strongly-convex with a unique solution that satisfies
\begin{equation}\label{APPeq:proxSol}
	\frac{1}{\alpha}(x_i^k - y) - \nabla_i f(x^k) \in \partial g_i(y).
\end{equation}
By Theorem~\ref{thm:generalstep} and $\alpha < 2/L$, there exists a minimum finite iterate $\bar{k}$ such that $\norm{x^{\bar{k}}~-~x^*}~\leq~\delta \alpha /3$. Following similar steps as in Lemma~\ref{lem:activeset}, this implies that
\begin{equation}\label{APPeq:itBound}
	-\delta \alpha /3 \leq x_i^k - x_i^* \leq \delta \alpha/3, \quad \text{for all $i$},
\end{equation}
and by the Lipschitz continuity of $\nabla f$, we also have
\begin{equation}\label{APPeq:gBound}
	-\delta \alpha L/3 - \nabla_i f(x^*) \leq - \nabla_i f(x^k) \leq \delta \alpha L/3- \nabla_i f(x^*).
\end{equation}
To complete the proof it is sufficient to show that for any $k \geq \bar{k}$ and $i \in \mathcal{Z}$ that $y = x_i^*$ satisfies \eqref{APPeq:proxSol}. We first show that the left-side is less than the upper limit  $u_i$ of the interval $\partial g_i(x_i^*)$,
\begin{align*}
    \frac{1}{\alpha}(x_i^k - x_i^*) - \nabla_i f(x^k) 
    & \leq \delta/3- \nabla_i f(x^k) & \text{(right-side of~\eqref{APPeq:itBound})}\\
    & \leq \delta(1 + \alpha L)/3 - \nabla_i f(x^*) & \text{(right-side of~\eqref{APPeq:gBound})}\\
    & \leq \delta - \nabla_i f(x^*) & \text{(upper bound on $\alpha$)}\\
    & \leq (u_i + \nabla_i f(x^*)) - \nabla_i f (x^*) & \text{(definition of $\delta$,~\eqref{eq:Delta})}\\
    & \leq u_i.
\end{align*}
Using the left-sides of~\eqref{APPeq:itBound} and~\eqref{APPeq:gBound}, and an analogous sequence of inequalities, we can show that $\frac{1}{\alpha}(x_i^k~-~x_i^*)~-~\nabla_i f(x^k) \geq l_i$, implying that $x_i^*$ solves~\eqref{APPeq:proxSol}. Since the solution to~\eqref{APPeq:proxSol} is unique, this implies the desired result. \qed
\bigskip 
\noindent {\it Proof of Corollary~\ref{cor:2}}. 
By Theorem \ref{thm:generalstep}, we know that the proximal gradient method achieves the following linear convergence rate,
\[
	\norm{x^{k+1} - x^*} \leq Q(\alpha)^k\norm{x^0 - x^*}.
\]
The proof of Lemma \ref{lem:2} shows that the active-set identification occurs whenever the inequality $\norm{x^k - x^*} \leq \delta \alpha/3$ is satisfied. Thus, we want
\[
Q(\alpha)^k\norm{x^0 - x^*} \le \frac{\delta \alpha}{3}.
\]
Taking the $\log$ of both sides, we obtain
\[
	k \log \left (Q(\alpha) \right) + \log \left (\norm{x^0 - x^*} \right ) \le \log \left ( \frac{\delta \alpha}{3} \right ).
\]
Noting that $0 < Q(\alpha) < 1$ so $\log(Q(\alpha)) < 0$, we can rearrange to obtain
\begin{align*}
	k  &\ge \frac{1}{\log \left (Q(\alpha) \right)} \log \left (\frac{\delta \alpha}{3\norm{x^0 - x^*}} \right)
	= \frac{1}{\log(1/Q(\alpha))} \log \left (\frac{3\norm{x^0 - x^*}}{\delta \alpha} \right).
\end{align*}
\qed

\bibliographystyle{spmpsci} 
\bibliography{bib}

\begin{thebibliography}{10}
\providecommand{\url}[1]{{#1}}
\providecommand{\urlprefix}{URL }
\expandafter\ifx\csname urlstyle\endcsname\relax
  \providecommand{\doi}[1]{DOI~\discretionary{}{}{}#1}\else
  \providecommand{\doi}{DOI~\discretionary{}{}{}\begingroup
  \urlstyle{rm}\Url}\fi

\bibitem{beck2009}
Beck, A., Teboulle, M.: A fast iterative shrinkage-thresholding algorithm for
  linear inverse problems.
\newblock SIAM J. Imaging Sci. \textbf{2}(1), 183--202 (2009)

\bibitem{bertsekas1976goldstein}
Bertsekas, D.P.: On the {G}oldstein-{L}evitin-{P}olyak gradient projection
  method.
\newblock IEEE Trans. Autom. Control \textbf{21}(2), 174--184 (1976)

\bibitem{bertsekas2015convex}
Bertsekas, D.P.: Convex Optimization Algorithms.
\newblock Athena Scientific Belmont (2015)

\bibitem{bertsekas1999nonlinear}
Bertsekas, D.P.: Nonlinear Programming, 3rd edn.
\newblock Athena Scientific (2016)

\bibitem{boyd2004convex}
Boyd, S., Vandenberghe, L.: Convex Optimization.
\newblock Cambridge University Press (2004)

\bibitem{buchheim2016}
Buchheim, C., De~Santis, M., Lucidi, S., Rinaldi, F., Trieu, L.: A feasible
  active set method with reoptimization for convex quadratic mixed-integer
  programming.
\newblock SIAM J. Optim. \textbf{26}(3), 1695--1714 (2016)

\bibitem{burke1988identification}
Burke, J.V., Mor{\'e}, J.J.: On the identification of active constraints.
\newblock SIAM J. Numer. Anal. \textbf{25}(5), 1197--1211 (1988)

\bibitem{combettes2005}
Combettes, P.L., Wajs, V.R.: Signal recovery by proximal forward-backward
  splitting.
\newblock Multiscale Model. Simul. \textbf{4}, 1168--1200 (2005)

\bibitem{cortes1995}
Cortes, C., Vapnik, V.: Support-vector networks.
\newblock Machine Learning \textbf{20}, 273--297 (1995)

\bibitem{curtis2015}
Curtis, F.E., Han, Z., Robinson, D.P.: A globally convergent primal-dual
  active-set framework for large-scale convex quadratic optimization.
\newblock Comput. Optim. Appl. \textbf{60}, 311--341 (2015)

\bibitem{santis2015}
De~Santis, M., Lucidi, S., Rinaldi, F.: A fast active set block coordinate
  descent algorithm for $\ell_1$-regularized least squares.
\newblock SIAM J. Optim. \textbf{26}(1), 781--809 (2016)

\bibitem{hare2011}
Hare, W.L.: Identifying active manifolds in regularization problems.
\newblock In: H.H. Bauschke, R.S. Burachik, P.L. Combettes, V.~Elser, D.R.
  Luke, H.~Wolkowicz (eds.) Fixed-Point Algorithms for Inverse Problems in
  Science and Engineering, pp. 261--271. Springer New York, New York, NY (2011)

\bibitem{hare2004}
Hare, W.L., Lewis, A.S.: Identifying active constraints via partial smoothness
  and prox-regularity.
\newblock J. Convex Analysis \textbf{11}(2), 251--266 (2004)

\bibitem{krishnan2007}
Krishnan, D., Lin, P., Yip, A.M.: A primal-dual active-set method for
  non-negativity constrained total variation deblurring problems.
\newblock {IEEE} Trans. Image Process. \textbf{16}(11), 2766--2777 (2007)

\bibitem{lee2012manifold}
Lee, S., Wright, S.J.: Manifold identification in dual averaging for
  regularized stochastic online learning.
\newblock J. Mach. Learn. Res. \textbf{13}(1), 1705--1744 (2012)

\bibitem{levitin1966constrained}
Levitin, E.S., Polyak, B.T.: Constrained minimization methods.
\newblock USSR Comput. Math. Math. Phys. \textbf{6}, 1--50 (1966)

\bibitem{liang2017activity}
Liang, J., Fadili, J., Peyr{\'e}, G.: Activity identification and local linear
  convergence of forward--backward-type methods.
\newblock SIAM J. Optim. \textbf{27}(1), 408--437 (2017)

\bibitem{mifflin2002}
Mifflin, R., Sagastiz\'abal, C.: Proximal points are on the fast track.
\newblock J. Convex Analysis \textbf{9}(2), 563--579 (2002)

\bibitem{Nes04b}
Nesterov, Y.: Introductory Lectures on Convex Optimization: A Basic Course.
\newblock Kluwer Academic Publishers, Dordrecht, The Netherlands (2004)

\bibitem{nesterov2013}
Nesterov, Y.: Gradient methods for minimizing composite functions.
\newblock Math. Program. Ser. B \textbf{140}, 125--161 (2013)

\bibitem{nutini2017}
Nutini, J., Laradji, I., Schmidt, M.: Let's make block coordinate descent go
  fast: Faster greedy rules, message-passing, active-set complexity, and
  superlinear convergence.
\newblock arXiv:1712.08859  (2017)

\bibitem{schmidt2011convergence}
Schmidt, M., Roux, N.L., Bach, F.R.: Convergence rates of inexact
  proximal-gradient methods for convex optimization.
\newblock In: Proceedings of the 24th International Conference on Neural
  Information Processing Systems, pp. 1458--1466. Grenada, Spain (2011)

\bibitem{byrd2015}
Solntsev, S., Nocedal, J., Byrd, R.H.: An algorithm for quadratic
  $\ell_1$-regularized optimization with flexible active-set strategy.
\newblock Optim. Method Softw. \textbf{30}(6), 1213--1237 (2015)

\bibitem{tibshirani1996}
Tibshirani, R.: Regression shrinkage and selection via the lasso.
\newblock J. Royal Stat. Soc., Series B \textbf{58}(1), 267--288 (1996)

\bibitem{wen2010}
Wen, Z., Yin, W., Goldfarb, D., Zhang, Y.: A fast algorithm for sparse
  reconstruction based on shrinkage, subspace optimization, and continuation.
\newblock SIAM J. Sci. Comput. \textbf{32}(4), 1832--1857 (2010)

\bibitem{wright1993identifiable}
Wright, S.J.: Identifiable surfaces in constrained optimization.
\newblock SIAM J. Control Optim. \textbf{31}(4), 1063--1079 (1993)

\bibitem{wright2012}
Wright, S.J.: Accelerated block-coordinate relaxation for regularized
  optimization.
\newblock SIAM J. Optim. \textbf{22}(1), 159--186 (2012)

\end{thebibliography}

\end{document}